\newtheorem{theorem}{\bf Theorem}[section]
\newtheorem{remark}{\bf Remark}[section]
\newcommand{\eps}{\varepsilon}
\newcommand{\cg}{\mathcal{C}_1^g}
\newcommand{\tr}{\mathrm{tr}\,}
\newcommand{\bn}{\mathbf{n}}
\title{Dimension reduction for the Landau-de Gennes model in planar nematic thin films}
\author{Dmitry Golovaty\\
\small Department of Mathematics\\
\small The University of Akron\\
\small Akron, OH 4325-4002, USA\\
\small \texttt{dmitry@uakron.edu}\\
\and
Jos\'e Alberto Montero\\
\small Departamento de Matem\'aticas\\
\small Facultad de Matem\'aticas\\
\small Pontificia Universidad Cat\'olica de Chile\\
\small Vicu\~na Mackenna 4860\\
\small San Joaqu\'in, Santiago, Chile\\
\small \texttt{amontero@mat.puc.cl}
\and 
Peter Sternberg\\
\small Department of Mathematics\\
\small Indiana University\\
\small Bloomington, IN 47405\\
\small \texttt{sternber@indiana.edu}
}
\date{}
\begin{document}
\maketitle

\begin{abstract}
We use the method of $\Gamma$-convergence to study the behavior of the Landau-de Gennes model for a nematic liquid crystalline film in the limit of vanishing thickness. In this asymptotic regime, surface energy plays a greater role and we take particular care in understanding its influence on the structure of the minimizers of the derived two-dimensional energy. We assume general weak anchoring conditions on the top and the bottom surfaces of the film and the strong Dirichlet boundary conditions on the lateral boundary of the film. The constants in the weak anchoring conditions are chosen so as to enforce that a surface-energy-minimizing nematic $Q$-tensor has the normal to the film as one of its eigenvectors. We establish a general convergence result and then discuss the limiting problem in several parameter regimes.
\end{abstract}

\section{Introduction}
In this paper we study equilibrium configurations of thin nematic liquid crystalline films. Nematic materials are typically composed of rod- or disk-like molecules and can flow like fluids, yet they retain a degree of molecular orientational order similar to crystalline solids. There are several mathematical frameworks to study the nematics, leading to different, but related variational models. The focus of the present work is on rigorous dimensional reduction of the Landau-de Gennes $Q$-tensor model from three to two dimensions.

We begin by briefly reviewing the basic theory of nematics. The local orientations of molecules in {\em uniaxial nematics} are described by a director---a unit vector in a direction preferred by the molecules at a given point. The director field underlies the Oseen-Frank theory \cite{virga} based on an energy penalizing for spatial variations of the director.  This theory incorporates various elastic modes (splay, bend, twist) and interactions with electromagnetic fields and has generally been very successful in predicting equilibrium nematic configurations. However the Oseen-Frank approach is limited in that it prohibits certain types of topological defects, e.g., disclinations, as the constraint that the director must have a unit length is too rigid. A possible remedy was proposed by Ericksen \cite{ericksen} who introduced a scalar parameter intended to describe the quality of local molecular orientational order.

Despite the fact that Ericksen's theory is capable of handling line defects, it still assumes that a preferred direction is specified by the director, excluding a possibility that the nematic can be biaxial. Here a biaxial state differs from a uniaxial state in that it has no rotational symmetry; instead it possesses reflection symmetries with respect to each of three orthogonal axes (only two of which need to be specified). Biaxial configurations are conjectured to exist, e.g., at the core of a nematic defect \cite{kralj2001universal}. Further, certain nematic configurations cannot even be orientable, that is, they cannot be described by a continuous director field \cite{Ball_Zarn}. These deficiencies can be circumvented within the Landau-de Gennes theory in three dimensions that we will discuss in subsequent sections (see also \cite{Ball_Zarn}, \cite{apala_zarnescu_01}, and \cite{Mottram_Newton}). Briefly, the Landau-de Gennes theory is based on the $Q$-tensor order parameter field that is related to the second moment of local orientational probability distribution. The relevant variational model involves minimization of an energy functional consisting of elastic, bulk and surface contributions. Recently there has been considerable activity 
on modeling with surface energy terms using both $Q$-tensor and director theories, including for example, \cite{apala_zarnescu_01,canevari13,Lamy14,GM,segatti14,ball2010nematic}.

We are interested in proper reduction of the Landau-de Gennes model to two dimensions in the thin film limit. In this asymptotic regime, surface energy plays a greater role and we take particular care in understanding its influence on the structure of the minimizers of the derived two-dimensional energy. To achieve this goal we employ the tool of $\Gamma$-convergence that has proved successful in tackling problems of dimension reduction in other settings, such as elasticity \cite{anzellotti1994dimension} and Ginzburg-Landau theory \cite{contreras2010gamma}.  We work in the domain $\Omega\times(0,h)$ where $0<h\ll1$ and $\Omega\subset\mathbb R^2$ is bounded and Lipschitz. In a subsequent publication we extend this analysis to the case of a small neighborhood of an arbitrary smooth surface, either with or without boundary, as a rigorous analog of the dimension reduction procedure in \cite{Napoli_Vergori} (see also \cite{virga_talk}).

In Section \ref{s:surf} we introduce and analyze the general expression for the surface energy and then, in Section \ref{s:model} combine it with the bulk and elastic terms to form the full non-dimensionalized three dimensional energy functional. In Section \ref{s:conv} we derive the expression for the limiting energy $F_0$ and in Section \ref{s:reg} we analyze minimizers of $F_0$ in different parameter regimes. In all regimes that we consider, it is crucial that the space of competing $Q$-tensors is constrained to accommodate the requirement that each tensor has a normal to the surface of the film as an eigenvector. This condition is forced in the limit by the surface energy and justifies the reduced $Q$-tensor ansatz imposed in \cite{PhysRevLett.59.2582,bauman_phillips_park} in relation to experiments in \cite{PhysRevE.66.030701}.

\section{The $Q$-tensor}
In the three-dimensional setting, one describes the nematic liquid crystal by a $2$-tensor $Q$ which takes the form of a $3\times  3$ symmetric, traceless matrix. Here $Q(x)$ models the second moment of the orientational distribution of the rod-like molecules near $x$. The tensor $Q$  has three real eigenvalues satisfying $\lambda_1+\lambda_2+\lambda_3=0$ and a mutually orthonormal eigenframe $\left\{\mathbf{l},\mathbf{m},\mathbf{n}\right\}$.

Suppose that $\lambda_1=\lambda_2=-\lambda_3/2.$ Then the liquid crystal is in a {\em uniaxial nematic} state and \begin{equation}Q=-\frac{\lambda_3}{2}\mathbf{l}\otimes\mathbf{l}-\frac{\lambda_3}{2}\mathbf{m}\otimes\mathbf{m}+
\lambda_3\mathbf{n}\otimes\mathbf{n}=S\left(\mathbf{n}\otimes\mathbf{n}-\frac{1}{3}\mathbf{I}\right),\label{uniaxial}
 \end{equation}
 where $S:=\frac{3\lambda_3}{2}$ is the uniaxial nematic order parameter and $\mathbf{n}\in\mathbb{S}^2$ is the nematic director. If there are no repeated eigenvalues, the liquid crystal is in a {\em biaxial nematic} state and
\begin{multline}
Q=\lambda_1\mathbf{l}\otimes\mathbf{l}+\lambda_3\mathbf{n}\otimes\mathbf{n}-\left(\lambda_1+\lambda_3\right)\left(\mathbf{I}-\mathbf{l}\otimes\mathbf{l}-\mathbf{n}\otimes\mathbf{n}\right)\\=S_1\left(\mathbf{l}\otimes\mathbf{l}-\frac{1}{3}\mathbf{I}\right)+S_2\left(\mathbf{n}\otimes\mathbf{n}-\frac{1}{3}\mathbf{I}\right),
\label{biaxial}\end{multline}
where $S_1:=2\lambda_1+\lambda_3$ and $S_2=\lambda_1+2\lambda_3$ are biaxial order parameters.
Note that uniaxiality can also be described in terms of $S_1$ and $S_2$, that is one of the following three cases occurs: $S_1=0$ but $S_2\not=0$, $S_2=0$ but $S_1\not =0$ or $S_1=S_2\not=0.$ When $S_1=S_2=0$ so that ${\bf Q}=0$ the nematic liquid crystal is said to be in an isotropic state associated, for instance, with a high
temperature regime.

From the modeling perspective it turns out that the eigenvalues of $Q$ must satisfy the constraints \cite{ball2010nematic,sonnet2012dissipative}:
\begin{equation}
\label{eq:bnds}
\lambda_i\in[-1/3,2/3],\ \mathrm{for}\ i=1,2,3.
\end{equation}

\section{Surface energy}
\label{s:surf}
In this section we discuss the behavior of the nematic on the boundary of the sample. Here two alternatives are possible. First, the Dirichlet boundary conditions on $Q$ are referred to as strong anchoring conditions in the physics literature: they impose specific preferred orientations on nematic molecules on surfaces bounding the liquid crystal. In the sequel we impose these conditions on the lateral part of the cylindrical sample $\partial\Omega\times(0,h)$. An alternative is to specify the surface energy on the boundary of the sample; then orientations of the molecules on the boundary are determined as a part of the minimization procedure. We adopt this approach, referred to as {\em weak anchoring}, on the top and the bottom surfaces of the film.

We seek a general expression for surface energy that has a family of surface-energy-minimizing tensors with the normal to the surface of the liquid crystal as their eigenvector. The requirement that the normal to the film is also an eigenvector of the $Q$ tensor is motivated by the desire to model both homeotropic and parallel anchoring---corresponding to the nematic molecules oriented perpendicular and parallel to the surface of the film, respectively \cite{virga}. In the former case the uniaxial nematic tensor is prescribed on the boundary (up to the multiplicative order parameter) with the director being perpendicular to the surface of the film. In the latter case, the director orientation is perpendicular to the normal to the film but otherwise may be arbitrary.

Consider the "bare" surface energy (Eq. 7 in \cite{OH}, see also \cite{sen1987landau} and \cite{sluckin1986fluid})
\begin{equation}
\label{bare}
f_s(Q,\nu):=c_1(Q\nu\cdot\nu)+c_2Q\cdot Q+c_3(Q\nu\cdot\nu)^2+c_4{|Q\nu|}^2,
\end{equation}
where $c_i,\ i=1,\ldots,4$ are constants, $A\cdot B=\tr\left(B^TA\right)$ for any two $n\times n$ matrices $A$ and $B$, and $\nu\in\mathbb{S}^2$ is a normal to the surface of the liquid crystal. This expression can be supplemented, in principle, by the surface Landau-de Gennes-type expression (Eq. 75 in \cite{Mottram_Newton}) in order to control eigenvalues and to relax constraints on the constants $c_i$ in \eqref{bare} that will be imposed below. This would amount essentially to augmenting \eqref{bare} with an expression of the form \eqref{eq:LdG} below.

Fix $\nu\in\mathbb{S}^1$ and let
\begin{equation}
\label{eq:cala}
\mathcal A:=\left\{Q\in M^{3\times3}_{sym}:\mathrm{tr}\,{Q}=0\right\}. 
\end{equation}
We now explore different parameter regimes associated with minimization of the surface energy over the set $\mathcal A$. Some comments along these lines can be found in \cite{fournier2005modeling}.
\begin{theorem}
\label{thm:surf}
The minimum of $f_s(Q,\nu)$ over $Q\in\mathcal{A}$ is achieved in the following five cases as characterized below in terms of the parameters $c_1,\ldots,c_4$:
\begin{enumerate}[(i)]
\item If $\min\{c_2,2c_2+c_4,3c_2+2c_3+2c_4\}>0$, then the minimum of $f_s(Q,\nu)$ is achieved at any $Q$ that is uniaxial and homeotropic with the eigenvalue $-\frac{c_1}{3c_2+2c_3+2c_4}$ corresponding to the eigenvector $\nu$.
\item If $\min\{c_2,2c_2+c_4\}>0$ and $3c_2+2c_3+2c_4=c_1=0$, then the minimum of $f_s(Q,\nu)$ is achieved at any $Q$ that is uniaxial and homeotropic, i.e., with the nematic director parallel to $\nu$.
\item If $\min\{c_2,2c_3-c_2\}>0$ and $2c_2+c_4=0,$ then the minimum of $f_s(Q,\nu)$ is achieved at any $Q$ that satisfies $Q\nu\cdot\nu=\frac{c_1}{c_2-2c_3}$ and has one eigenvector orthogonal to $\nu$ with eigenvalue $\frac{c_1}{4c_3-2c_2}$.
\item If $\min\{2c_2+c_4,c_3+c_4\}>0$ and $c_2=0,$ then the minimum of $f_s(Q,\nu)$ is achieved at any uniaxial or biaxial $Q$ of the form
\begin{equation}
\label{eq:mu}
Q=\mu\mathbf{m}\otimes\mathbf{m}+\left(\frac{c_1}{2(c_3+c_4)}-\mu\right)\mathbf{n}\otimes\mathbf{n}-\frac{c_1}{2(c_3+c_4)}\nu\otimes\nu,
\end{equation}
where $\left\{\mathbf{m}, \mathbf{n}\right\}$ is an arbitrary orthonormal frame in the plane tangent to the surface of the liquid crystal, and $\mu\in\mathbb{R}$ is arbitrary. 
\item If $c_3>0$ and $c_2=c_4=0,$ then the minimum of $f_s(Q,\nu)$ is achieved and any minimizing $Q$ must satisfy $Q\nu\cdot\nu=-\frac{c_1}{2c_3}$.
\end{enumerate}
In all other cases, $\inf_{Q\in\mathcal{A}}f_s(Q,\nu)=-\infty$.

\end{theorem}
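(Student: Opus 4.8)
The plan is to exploit the rotational invariance of $f_s$ to fix the normal $\nu$, then to expand $f_s$ in a coordinate system adapted to $\nu$ so that it decouples into a sum of one-variable quadratics, and finally to read off the infimum and its minimizers from the signs of the coefficients.

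First I would observe that $f_s(RQR^T,R\nu)=f_s(Q,\nu)$ for every $R\in SO(3)$, since each of $Q\nu\cdot\nu$, $Q\cdot Q=\tr(Q^2)$ and $|Q\nu|^2$ is unchanged under the simultaneous rotation $Q\mapsto RQR^T$, $\nu\mapsto R\nu$, and since $Q\mapsto RQR^T$ is a bijection of $\mathcal A$. Hence $\inf_{\mathcal A}f_s(\cdot,\nu)$ does not depend on $\nu\in\mathbb S^2$ and it is enough to treat $\nu=e_3$; minimizers for a general $\nu$ are then obtained by conjugating those for $e_3$. With $\nu=e_3$ I would parametrize $Q\in\mathcal A$ by the five free reals $q:=Q_{33}$, $d:=\tfrac{1}{2}(Q_{11}-Q_{22})$, $p:=Q_{12}$, $a:=Q_{13}$, $b:=Q_{23}$, using $\tr Q=0$ to write $Q_{11}=-q/2+d$, $Q_{22}=-q/2-d$. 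A short computation gives $Q\nu\cdot\nu=q$, $|Q\nu|^2=q^2+a^2+b^2$ and $Q\cdot Q=\tfrac{3}{2}q^2+2d^2+2p^2+2a^2+2b^2$, so that
\[
f_s(Q,e_3)=\Bigl(\tfrac{3}{2}c_2+c_3+c_4\Bigr)q^2+c_1q+2c_2\,(d^2+p^2)+(2c_2+c_4)(a^2+b^2).
\]
The key point is that the right-hand side splits into three blocks in pairwise disjoint groups of variables.

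I would then analyze this decoupled expression term by term. Setting $\alpha:=\tfrac{1}{2}(3c_2+2c_3+2c_4)$, the piece $\alpha q^2+c_1q$ is bounded below iff $\alpha>0$, or $\alpha=0$ and $c_1=0$; the piece $2c_2(d^2+p^2)$ is bounded below iff $c_2\ge0$; and $(2c_2+c_4)(a^2+b^2)$ is bounded below iff $2c_2+c_4\ge0$. If any of these fails, sending the offending variable to infinity gives $\inf_{\mathcal A}f_s=-\infty$; if all three hold the infimum is finite and attained, and the minimizer set is determined by whether each coefficient is positive or zero: $q$ is pinned to $-c_1/(2\alpha)$ when $\alpha>0$ and is arbitrary when $\alpha=0$; $d=p=0$ when $c_2>0$ while the in-plane deviatoric pair $(d,p)$ is free when $c_2=0$; and $a=b=0$ when $2c_2+c_4>0$ while $(a,b)$ is free when $2c_2+c_4=0$.

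Finally I would translate this pinned/free dichotomy into the geometric descriptions (i)--(v) and check that their union is exactly the region just found. For example, when $c_2>0$, $2c_2+c_4>0$ and $\alpha>0$ all of $d,p,a,b$ vanish and $q=-c_1/(3c_2+2c_3+2c_4)$, so in the $\nu$-frame $Q=\mathrm{diag}(-q/2,-q/2,q)$, uniaxial and homeotropic with $\nu$-eigenvalue $q$ --- this is (i); relaxing to $\alpha=0=c_1$ frees $q$, giving (ii); the subcase $2c_2+c_4=0$ frees $a,b$, making $(b,-a,0)$ an eigenvector with eigenvalue $-q/2$, which after substituting $c_4=-2c_2$ and rewriting is (iii); the subcase $c_2=0$ frees $(d,p)$, i.e.\ the whole $2\times2$ in-plane block, giving the family (iv); and $c_2=c_4=0$ frees all of $d,p,a,b$, leaving only $q=-c_1/(2c_3)$, which is (v). The eigenvalue formulas in the statement are just rewritings of $-c_1/(2\alpha)$ and $-q/2$ under the relevant relations among the $c_i$. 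No single estimate here is difficult; I expect the main obstacle to be the disciplined enumeration of the sub-cases and the bookkeeping required both to match each sub-case with the correct eigen-structure and to confirm that the collection (i)--(v) exhausts the bounded-below regime.
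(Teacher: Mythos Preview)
Your approach is correct and follows the same overall strategy as the paper: fix $\nu=e_3$ by rotational invariance, reduce $f_s$ to a quadratic in adapted coordinates, and analyze the signs of the coefficients case by case. The difference is in the choice of coordinates. The paper parametrizes $Q$ by the pair $(Q_2,x)$ with $Q_2\in M^{2\times 2}_{sym}$ and $x=Q\nu\in\mathbb R^3$, subject to the constraint $\tr Q_2+x\cdot\nu=0$; it then minimizes in two stages, first over $Q_2$ (via a Lagrange multiplier, yielding $Q_2=-\tfrac12(x\cdot\nu)I_2$ when $c_2>0$), and then over $x$. Your parametrization by the five free reals $(q,d,p,a,b)$ builds the trace constraint in from the start, so the quadratic form decouples immediately into the three independent blocks $\alpha q^2+c_1q$, $2c_2(d^2+p^2)$, $(2c_2+c_4)(a^2+b^2)$ without any constrained optimization. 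This is a genuine simplification: it makes the boundedness criterion and the pinned/free structure of the minimizing set transparent in one step, whereas the paper's two-stage argument has to treat the cases $c_2>0$ and $c_2=0$ separately before even reaching the reduced functional $\tilde f_s(x,\nu)$. Both routes lead to the same eigen-structure descriptions (i)--(v), and your closing remark about the main difficulty being the bookkeeping is accurate for either approach.
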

\begin{proof}
Without loss of generality we may assume that $\nu=(0,0,1):=\hat z$. In order to find $Q\in\mathcal A$ that minimizes $f_s$, observe that
\[Q\cdot Q=2{|Q\nu|}^2-{(Q\nu\cdot\nu)}^2+Q_2\cdot Q_2,\]
where $Q_2\in M^{2\times2}_{sym}$ is a nonzero square block of $\left(\mathbf{I}-\nu\otimes\nu\right)Q\left(\mathbf{I}-\nu\otimes\nu\right).$ Then the expression for $f_s$ can be written as
\begin{equation}
\label{bare_m}
f_s(Q,\nu)=c_1(x\cdot\nu)+c_2Q_2\cdot Q_2+\left(c_3-c_2\right)(x\cdot\nu)^2+\left(2c_2+c_4\right){|x|}^2,
\end{equation}
where $x:=Q\nu\in\mathbb{R}^3$. The traceless condition for $Q$ can be reformulated in terms of $Q_2$ and $x$ as
\begin{equation}
\label{trcond}
\mathrm{tr}\,Q_2+x\cdot\nu=0\,.
\end{equation}
Thus, we are looking for the minimum of \eqref{bare_m} among all $Q_2\in M^{2\times2}_{sym}$ and $x\in\mathbb{R}^3$ that satisfy the condition \eqref{trcond}. 

Suppose first that $c_2<0$. Then, setting $x=0$ and observing that
\[Q_2=
\left(
\begin{array}{cc}
 \alpha & 0  \\
 0 & -\alpha  \\
\end{array}
\right)
\]
satisfies the constraint \eqref{trcond} for any $\alpha\in\mathbb{R},$ we conclude that $\inf_{x,Q_2}f_s=-\infty.$ We leave the verification of other parameter regimes which result in  $\inf_{x,Q_2}f_s=-\infty$ to the reader and instead concentrate on the five cases laid out in the statement of the theorem. 

To this end, let $c_2>0$. Minimizing $f_s$ with respect to $Q_2$ subject to the constraint \eqref{trcond} is equivalent to minimizing $Q_2\cdot Q_2$ subject to \eqref{trcond}. We have
\begin{equation}
\label{eq:1}
2Q_2+\Lambda\mathbf{I}_2=0,
\end{equation}
where $\Lambda$ is the Lagrange multiplier. Combining \eqref{trcond} and \eqref{eq:1}, we find that $\Lambda=(x\cdot\nu)$ and 
\begin{equation}
\label{eq:Q2M}
Q_2=-\frac{1}{2}(x\cdot\nu)\mathbf{I}_2.
\end{equation}
Substituting this expression back into \eqref{bare_m}, we have
\[\inf_{\mathcal{A}} f_s=\inf_{\mathbb{R}^3}\tilde{f}_s,\]
where
\begin{equation}
\label{eq:exp1}
\tilde{f}_s(x,\nu):=c_1(x\cdot\nu)+\left(c_3-\frac{c_2}{2}\right)(x\cdot\nu)^2+\left(2c_2+c_4\right){|x|}^2,
\end{equation}
or, equivalently,
\begin{equation}
\label{eq:exp2}
\tilde{f}_s(x,\nu):=c_1(x\cdot\nu)+\left(\frac{3}{2}c_2+c_3+c_4\right)(x\cdot\nu)^2+\left(2c_2+c_4\right){\left|\left({\mathbf I}-\nu\otimes\nu\right)x\right|}^2,
\end{equation}
Observe that if $\min\left\{3c_2+2c_3+2c_4,2c_2+c_4\right\}<0$, then $\inf_{x}\tilde f_s=-\infty$. Suppose now that $\min\left\{3c_2+2c_3+2c_4,2c_2+c_4\right\}>0$ so that there is a critical point $x$ of $\tilde f_s$ that satisfies
\[
\frac{\partial \tilde{f}_s}{\partial x}=c_1\nu+\left(2c_3-c_2\right)(x\cdot\nu)\nu+2\left(2c_2+c_4\right)x=0.
\]
It follows that $x$ is parallel to $\nu$ and
\begin{equation}
\label{eq:3}
x=-\frac{c_1}{3c_2+2c_3+2c_4}\nu
\end{equation}
is a minimum. Consequently,
\begin{equation}
\label{eq:5}
Q\nu=\lambda\nu,
\end{equation}
where $\lambda=-\frac{c_1}{3c_2+2c_3+2c_4}$. Combining \eqref{eq:5} with the expression for $Q_2$, we find the single minimum
\begin{equation}
\label{eq:4}
Q=\frac{3\lambda}{2}\left(\nu\otimes\nu-\frac{1}{3}\mathbf{I}\right),
\end{equation}
of the surface energy corresponding to a fixed uniaxial nematic state with the order parameter $S=\frac{3\lambda}{2}$ and the nematic director $\nu$. This is the case of so-called homeotropic (perpendicular) anchoring. This establishes (i).

Proceeding with the proof of (ii), if $\min\{c_2,2c_2+c_4\}>0$ and $3c_2+2c_3+2c_4=c_1=0$ then the expression \eqref{eq:exp2} for $\tilde f_s$ reduces to its last term. Hence minimization simply requires $x$ to be parallel to $\nu$ meaning that $\nu$ is an eigenvector of $Q$. Further, $Q$ is uniaxial since the corresponding eigenvalue is $x\cdot\nu$ and $Q_2$ is given by \eqref{eq:Q2M}.

Next, if $\min\{c_2,2c_3-c_2\}>0$ and $2c_2+c_4=0,$ then \eqref{eq:exp1} reveals that $\tilde f_s$ is a quadratic function of $x\cdot\nu$ that is minimized when $x\cdot\nu=\frac{c}{c_2-2c_3}=:\sigma$. Combining this observation with \eqref{eq:Q2M} we obtain that the minimizing $Q$ must be of the form
\[
Q=\left(
\begin{array}{ccc}
 -\sigma/2 & 0  & q_{13}  \\
 0 & -\sigma/2  & q_{23}  \\
 q_{13} & q_{23}  & \sigma  
\end{array}
\right).
\]
From this we readily see that $\left(-q_{23},q_{13},0\right)$ is an eigenvector of $Q$. This establishes (iii). Note that the minimizing set of $f_s$ consists of a family of biaxial tensors that contains a single uniaxial representative corresponding to a homeotropic boundary condition. Here the biaxial tensors {\it do not} have a normal to the surface of the film as an eigenvector.

Now we pursue the regime $c_2=0$. In this case, from \eqref{bare_m} we see that $f_s$ is independent of $Q_2$. Thus the minimizing matrix $Q_2$ is arbitrary as long as it satisfies the trace constraint \eqref{trcond}. If, in addition, $\min\{2c_2+c_4,c_3+c_4\}>0$ the energy is still minimized by $x$ given by \eqref{eq:3} with $c_2=0$, i.e.,
\[Q\nu=-\frac{c_1}{2(c_3+c_4)}\nu.\]
It also follows that if $\left\{\mathbf{m}, \mathbf{n}\right\}$ is an arbitrary orthonormal frame in the plane tangent to the surface of the liquid crystal, then any
\[
Q=\mu\mathbf{m}\otimes\mathbf{m}+\left(\frac{c_1}{2(c_3+c_4)}-\mu\right)\mathbf{n}\otimes\mathbf{n}-\frac{c_1}{2(c_3+c_4)}\nu\otimes\nu
\]
minimizes the surface energy. This verifies (iv).

Finally in case (v) the expression \eqref{bare_m} reduces to a quadratic function of $x\cdot\nu$ that is minimized at $x\cdot\nu=-c_1/2c_3$. Observe that in this case the minimizing $Q_2$ is again arbitrary up to the trace constraint \eqref{trcond}.
\end{proof}

\begin{remark}
Note that the eigenvalues determined in the cases (i), (iii), and (iv) must respect the bounds \eqref{eq:bnds} on eigenvalues of $Q$ thereby imposing additional restrictions on the parameters $c_1,\ldots,c_4$.
\end{remark}

Having explored all possible parameter regimes we now focus on case (iv) where $\min\{2c_2+c_4,c_3+c_4\}>0$ and $c_2=0.$ Here the degeneracy of the set of tensors minimizing the surface energy $f_s$ provides sufficient freedom for nontrivial reduction to two-dimensional limits that we will carry out in the next section. An alternative approach would be to extend the surface energy by including quartic terms \cite{fournier2005modeling} and even surface derivative terms \cite{Longa}. 

By rearranging the terms in \eqref{bare}, the surface energy has the form
\begin{multline}
\label{fs}
f_s(Q,\nu)=c_1(Q\nu\cdot\nu)+c_3(Q\nu\cdot\nu)^2+c_4{|Q\nu|}^2 \\ =\left(c_3+c_4\right)\left[(Q\nu\cdot\nu)+\frac{c_1}{2(c_3+c_4)}\right]^2+c_4{\left|\left(\mathbf{I}-\nu\otimes\nu\right)Q\nu\right|}^2 \\ =\alpha\left[(Q\nu\cdot\nu)-\beta\right]^2+\gamma{\left|\left(\mathbf{I}-\nu\otimes\nu\right)Q\nu\right|}^2,
\end{multline}
up to an additive constant. Here $\alpha=c_3+c_4>0$, $\beta=-\frac{c_1}{2(c_3+c_4)}$, and $\gamma=c_4>0.$ This form of the surface energy explicitly demonstrates that the minimizer has $\nu$ as one eigenvector with corresponding eigenvalue equal to $\beta$. 

\section{Landau-de Gennes Energy Functional.\\ Non-dimensionalization}
\label{s:model}
Suppose that the bulk elastic energy density of a nematic liquid crystal is given by
\begin{equation}
\label{elastic}
f_e(\nabla Q):=\frac{L_1}{2}{|\nabla Q|}^2+\frac{L_2}{2}Q_{ij,j}Q_{ik,k}+\frac{L_3}{2}Q_{ik,j}Q_{ij,k}+\frac{L_4}{2}Q_{lk}Q_{ij,k}Q_{ij,l}
\end{equation}
and that the bulk Landau-de Gennes energy density is
\begin{equation}
\label{eq:LdG}
f_{LdG}(Q):=a\,\mathrm{tr}\left(Q^2\right)+\frac{2b}{3}\mathrm{tr}\left(Q^3\right)+\frac{c}{2}\left(\mathrm{tr}\left(Q^2\right)\right)^2,
\end{equation}
cf. \cite{Mottram_Newton}. Here the coefficient $a$ is temperature-dependent and in particular is negative for sufficiently low temperatures, and $c>0$.
One readily checks that the form \eqref{eq:LdG} of this potential implies that in fact $f_{LdG}$ depends only on the eigenvalues of $Q$, and due to the trace-free condition, therefore
 depends only on two eigenvalues. Equivalently, one can view $f_{LdG}$ as a function of the two degrees of orientation $S_1$ and $S_2$ appearing in \eqref{biaxial}. Furthermore,
  its form guarantees that the isotropic state $Q\equiv 0$ (or equivalently $S_1=S_2=0$) yields a global minimum at high temperatures while a uniaxial state of the form \eqref{uniaxial} where either $S_1=0,\;S_2=0$ or $S_1=S_2$ gives the minimum when temperature (i.e. the parameter $a$) is reduced below a certain critical value, cf. \cite{apala_zarnescu_01,Mottram_Newton}. In this paper we fix the temperature to be low enough so that the minimizers of $f_{LdG}$ are uniaxial. We also remark for future use that $f_{LdG}$ is bounded from below and can be made nonnegative by adding an appropriate constant. In light of this, we will henceforth assume a minimum value of zero for $f_{LdG}.$
\begin{figure}[htb]
\centering
\includegraphics[height=2in]{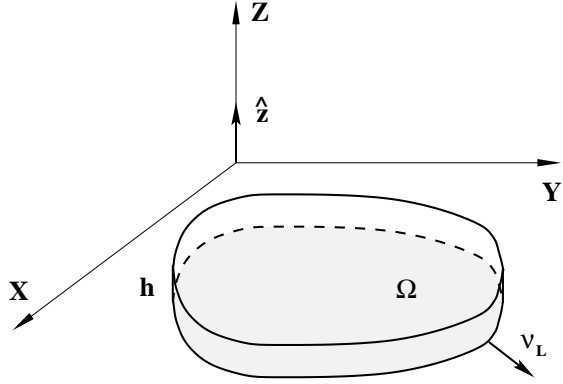}
\caption{Geometry of the problem.}
  \label{fig:1}
\end{figure}

Let $\Omega\subset\mathbb{R}^2$ be a bounded domain with a Lipschitz boundary and let $h>0$ be given (Figure \ref{fig:1}). Assume that the energy functional is
\begin{equation}
\label{energy}
E[Q]:=\int_{\Omega\times(0,h)}\left\{f_e(\nabla Q)+f_{LdG}(Q)\right\}\,dV+\int_{\Omega\times\{0,h\}}f_s(Q,\hat{z})\,dA\,,
\end{equation}
where $\hat{z}$ is a unit vector normal to the surface of the film. Given uniaxial data $g\in H^{1/2}(\partial\Omega;\mathcal{A})$ we prescribe the lateral boundary condition of the form
\begin{equation}
  \label{eq:bd}
  Q(x,y,z)=g(x,y)\ \mathrm{for}\ (x,y)\in\partial\Omega\ \mathrm{and}\ z\in (0,h).
\end{equation}

The admissible class of tensor-valued functions is then \[\mathcal{C}_h^g:=\left\{Q\in H^1\left(\Omega\times(0,h);\mathcal{A}\right):Q|_{\partial\Omega\times\{z\}}=g,\forall z\in(0,h)\right\},\] where $\mathcal{A}$ is the set of three-by-three symmetric traceless matrices defined in \eqref{eq:cala}. Throughout this work we assume that $g$ is uniaxial and is taken so that $\mathcal{C}_h^g$ is nonempty.

It has been shown, however, in \cite{ball2010nematic} that when $L_4\neq0$ minimizers of \eqref{energy} may fail to exist. On the other hand, $L_4=0$ precludes an appropriate reduction to the general form of the Oseen-Frank energy for nematics \cite{Mottram_Newton}. Since we are interested in a characterization of minimizers, we will set $L_4=0$.

We nondimensionalize the problem by scaling the spatial coordinates
\[\tilde{x}=\frac{x}{D},\ \tilde{y}=\frac{y}{D},\ \tilde{z}=\frac{z}{h},\ \]
where $D:=\mathrm{diam}(\Omega)$. Set $\xi=\frac{L_1}{2D^2},$ $\epsilon=\frac{h}{D}$ and introduce $\tilde{f}_e(Q,\nabla Q):=\frac{1}{\xi}f_e(Q,\nabla Q).$ Dropping tildes, we obtain
\begin{multline}
f_e(\nabla Q): =\left[Q_{im,j}Q_{im,j}+M_2Q_{ik,k}Q_{ij,j}+M_3Q_{ij,k}Q_{ik,j}\right]\\
+\frac{2}{\epsilon}\left[M_2Q_{ij,j}Q_{i3,3}+M_3Q_{i3,j}Q_{ij,3}\right] \\
+\frac{1}{\epsilon^2}\left[Q_{im,3}Q_{im,3}+(M_2+M_3)Q_{i3,3}Q_{i3,3}\right],
 \end{multline}
where $M_2=\frac{L_2}{L_1},$ $M_3=\frac{L_3}{L_1},$ the indices $i,m=1,2,3,$ and $j,k,l=1,2.$ Rescaling the Landau-de Gennes potential via $\tilde{f}_{LdG}(Q):=\frac{1}{w_l\xi}f_{LdG}(Q)$ and dropping tildes gives
\begin{equation}
f_{LdG}(Q)=2A\,\mathrm{tr}\left(Q^2\right)+\frac{4}{3}B\,\mathrm{tr}\left(Q^3\right)+{\left(\mathrm{tr}\left(Q^2\right)\right)}^2,
\end{equation}
where $A=\frac{a}{c},$ $B=\frac{b}{c},$ and $w_l=\frac{c}{2\xi}.$ Letting $\tilde\alpha=\frac{\alpha}{\xi},\ \tilde{\gamma}=\frac{\gamma}{\xi}$, setting
\[\tilde{f}_s(Q,\hat z):=\frac{1}{{\tilde\alpha}^2D\xi}f_s(Q,\hat z),\]
and dropping tildes, the expression for the nondimensionalized surface energy is
\begin{equation}
\label{eq:baren}
f_s(Q,\hat z)=\alpha\left[(Q\hat z\cdot\hat z)-\beta\right]^2+\gamma{\left|\left(\mathbf{I}-\hat z\otimes\hat z\right)Q\hat z\right|}^2.
\end{equation}
We conclude that the total energy is
\begin{equation}
\label{eq:8}
E[Q]=\xi D^2h\int_{\Omega\times(0,1)}\left(f_e(\nabla Q)+w_lf_{LdG}(Q)\right)\,dV+\xi D^3\int_{\Omega\times\{0,1\}}f_s(Q,\hat z)\,dA,
\end{equation}
where the rescaled domain is denoted by the same letter $\Omega$.
Introducing the non-dimensional energy $F_\epsilon[Q]:=\frac{2}{L_1h}E[Q]$, we find using \eqref{eq:8} that
\begin{equation}
\label{nden}
F_\epsilon[Q]=\int_{\Omega\times(0,1)}\left(f_e(\nabla Q)+w_lf_{LdG}(Q)\right)\,dV+\frac{1}{\epsilon}\int_{\Omega\times\{0,1\}}f_s(Q,\hat z)\,dA.
\end{equation}

\section{Convergence of minimizers of $F_\epsilon[Q]$ when $\epsilon\to 0$}
\label{s:conv}
Assume that an appropriate constant has been added to the Landau-de Gennes energy to guarantee that $F_\epsilon[Q]\geq 0$. We wish to consider a range of asymptotic regimes corresponding to different magnitudes of $\alpha$ and $\gamma$. To this end, we will assume that $\alpha=\alpha_0+\eps\alpha_1$ and $\gamma=\gamma_0+\eps\gamma_1$ for some nonnegative constants $\alpha_0, \alpha_1,\gamma_0,\gamma_1$. Then the surface energy density \eqref{eq:baren} can be written as
\begin{equation}
  \label{eq:se}
  f_s(Q,\hat z)=f_s^{(0)}(Q,\hat z)+\eps f_s^{(1)}(Q,\hat z),
\end{equation}
where 
\begin{equation}
\label{eq:fso}
f_s^{(0)}:=\alpha_0\left[(Q\hat z\cdot\hat z)-\beta\right]^2+\gamma_0{\left|\left(\mathbf{I}-\hat z\otimes\hat z\right)Q\hat z\right|}^2,
\end{equation}
and
\begin{equation}
\label{eq:fsi}
f_s^{(1)}:=\alpha_1\left[(Q\hat z\cdot\hat z)-\beta\right]^2+\gamma_1{\left|\left(\mathbf{I}-\hat z\otimes\hat z\right)Q\hat z\right|}^2.
\end{equation}
As it will become evident below, we can assume that $\alpha_0\alpha_1=\gamma_0\gamma_1=0$.
Let
 \begin{equation}
   \label{eq:f0}
   F_0[Q]:=\left\{
     \begin{array}{ll}
       \int_{\Omega}\left\{f_{e}^0(\nabla Q)+w_lf_{LdG}(Q)+2f_s^{(1)}(Q,\hat z)\right\}\,dA & \mbox{ if }Q\in H^1_g, \\
       +\infty & \mbox{ otherwise. }
     \end{array}
 \right.
 \end{equation}
Here \[f_{e}^0(\nabla Q)=Q_{im,j}Q_{im,j}+M_2Q_{ij,j}Q_{ik,k}+M_3Q_{ik,j}Q_{ij,k},\] the space \[H^1_g:=\left\{Q\in H^1(\Omega;\mathcal{D}):Q|_{\partial\Omega}=g\right\}\] and \[\mathcal{D}:=\left\{Q\in \mathcal{A}:f_s^{(0)}(Q)=0\right\},\] for some boundary data $g:\partial\Omega\to\mathcal{D}.$

We now state our main theorem on dimension reduction via $\Gamma$-convergence.  For those unfamiliar with the notion, we refer, for example, to \cite{DalMaso}. Note that whenever necessary we will view $H^1_g$ as a subset of $\mathcal{C}^g_1$ via a trivial extension to three dimensions.

\begin{theorem}
\label{t1}
Fix $g:\partial\Omega\to\mathcal{D}$ such that $H^1_g$ is nonempty. Assume that $-1< M_3<2$, and $-\frac{3}{5}-\frac{1}{10}M_3< M_2$. Then $\Gamma$-$\lim_\eps{F_\eps}=F_0$ weakly in $\mathcal{C}_1^g$. Furthermore, if a sequence $\left\{Q_\eps\right\}_{\eps>0}\subset\cg$ satisfies a uniform energy bound $F_\eps[Q_\eps]<C_0$ then there is a subsequence weakly convergent in $\cg$ to a map in $H^1_g$.
\end{theorem}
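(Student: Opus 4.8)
The plan is to prove the three assertions of a $\Gamma$-convergence statement --- the liminf inequality, the recovery (limsup) sequence, and the compactness of energy-bounded sequences --- treating the compactness claim first since it supplies the coercivity that makes the liminf inequality meaningful. For \textbf{compactness}: given $\{Q_\eps\}\subset\mathcal{C}_1^g$ with $F_\eps[Q_\eps]<C_0$, I would first check that the hypotheses $-1<M_3<2$ and $-\frac{3}{5}-\frac{1}{10}M_3<M_2$ guarantee that the quadratic form $f_e^0$ in the in-plane gradient (together with the always-nonnegative $\eps^{-2}$ term) is coercive, i.e. controls $|\nabla Q_\eps|^2$ on $\Omega\times(0,1)$ up to a constant; this is a linear-algebra computation on the symmetric $3\times3$ traceless matrices, diagonalizing the form $|\nabla Q|^2+M_2 Q_{ij,j}Q_{ik,k}+M_3 Q_{ik,j}Q_{ij,k}$. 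The energy bound then gives a uniform $H^1(\Omega\times(0,1))$ bound on $Q_\eps$ (the Dirichlet lateral condition handles the Poincar\'e constant, and $f_{LdG}\ge0$ costs nothing); pass to a weakly convergent subsequence $Q_\eps\rightharpoonup Q$ in $H^1$. It remains to show $Q\in H^1_g$, which has two parts: that $Q$ is $z$-independent, hence identifiable with a map on $\Omega$, and that $Q(\cdot)\in\mathcal{D}$ a.e., i.e. $f_s^{(0)}(Q,\hat z)=0$ a.e. on $\Omega$. The first follows because $F_\eps[Q_\eps]<C_0$ forces $\int|\partial_z Q_\eps|^2\le C_0\eps^2\to0$, so $\partial_z Q=0$ in the limit; the lateral trace condition $Q_\eps|_{\partial\Omega\times\{z\}}=g$ passes to the trace in the limit giving $Q|_{\partial\Omega}=g$.

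For the constraint $Q\in\mathcal{D}$ a.e.: the surface term contributes $\frac{1}{\eps}\int_{\Omega\times\{0,1\}} f_s(Q_\eps,\hat z)\,dA\le C_0$, and since $f_s=f_s^{(0)}+\eps f_s^{(1)}$ with both nonnegative, we get $\int_{\Omega\times\{0,1\}} f_s^{(0)}(Q_\eps,\hat z)\,dA\le C_0\eps\to0$. Combining with the trace theorem (the traces of $Q_\eps$ on $\Omega\times\{0\}$ and $\Omega\times\{1\}$ converge, at least along a further subsequence, in $L^2(\Omega)$ to the trace of the $z$-independent limit $Q$, using the $H^1$ bound and $\int|\partial_z Q_\eps|^2\to0$), we conclude $\int_\Omega f_s^{(0)}(Q,\hat z)\,dA=0$, hence $f_s^{(0)}(Q,\hat z)=0$ a.e., i.e. $Q\in\mathcal{D}$ a.e. This is the heart of the argument and also the step most likely to require care: one must make precise the sense in which ``the two face-traces of $Q_\eps$ converge to the single interior trace of the $z$-independent limit,'' which I would handle via the elementary inequality $\|Q_\eps(\cdot,0)-Q_\eps(\cdot,s)\|_{L^2(\Omega)}^2\le \int_0^s\!\!\int_\Omega|\partial_z Q_\eps|^2\le C_0\eps^2$ integrated in $s$, together with Rellich compactness in the $(x,y)$ variables.

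For the \textbf{liminf inequality}: given $Q_\eps\rightharpoonup Q$ weakly in $\mathcal{C}_1^g$, if $\liminf F_\eps[Q_\eps]=+\infty$ there is nothing to prove; otherwise pass to a subsequence realizing the liminf as a finite limit, apply the compactness argument to conclude $Q\in H^1_g$ (so $F_0[Q]<\infty$ is the relevant case), and then estimate term by term. The bulk term: write $f_e(\nabla Q_\eps)\ge f_e^0(\nabla' Q_\eps)+\frac{1}{\eps^2}[\cdots]$ minus the cross term of order $\frac{1}{\eps}$; the cross term is controlled by Cauchy--Schwarz against the $O(1)$ in-plane part and the $O(\eps^{-2})$ out-of-plane part, and one shows the net contribution of the $\frac{1}{\eps}$ and $\frac{1}{\eps^2}$ pieces is bounded below by (essentially) zero in the limit --- more carefully, completing the square in $\partial_z Q$ shows $f_e(\nabla Q_\eps)\ge f_e^0(\nabla' Q_\eps) - (\text{error}\to0)$, so by weak lower semicontinuity of the convex form $\int_{\Omega\times(0,1)}f_e^0(\nabla' Q)$ (and Fubini, $Q$ being $z$-independent) we get $\liminf\int f_e(\nabla Q_\eps)\ge\int_\Omega f_e^0(\nabla Q)$. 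The Landau--de Gennes term passes by strong $L^2$ (indeed $L^p$) convergence from Rellich and continuity/boundedness below of $f_{LdG}$. The surface term: $\frac1\eps\int_{\Omega\times\{0,1\}}f_s=\frac1\eps\int f_s^{(0)}+\int f_s^{(1)}$; the first is $\ge0$, and by the face-trace convergence established above, $\int_{\Omega\times\{0,1\}}f_s^{(1)}(Q_\eps,\hat z)\to 2\int_\Omega f_s^{(1)}(Q,\hat z)$, giving exactly the $2f_s^{(1)}$ term in $F_0$. Summing yields $\liminf F_\eps[Q_\eps]\ge F_0[Q]$.

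For the \textbf{recovery sequence}: given $Q\in H^1_g$, simply take $Q_\eps:=Q$ viewed as a $z$-independent map in $\mathcal{C}_1^g$ (its trivial extension). Then $\partial_z Q_\eps\equiv0$ kills all the $\frac1\eps$ and $\frac1{\eps^2}$ bulk terms; $\int_{\Omega\times(0,1)}f_e^0(\nabla Q_\eps)=\int_\Omega f_e^0(\nabla Q)$ by Fubini; $\int f_{LdG}(Q_\eps)=\int_\Omega f_{LdG}(Q)$; and since $Q\in\mathcal{D}$ a.e. we have $f_s^{(0)}(Q,\hat z)=0$ on both faces, so $\frac1\eps\int_{\Omega\times\{0,1\}}f_s(Q_\eps,\hat z)=\int_{\Omega\times\{0,1\}}f_s^{(1)}(Q,\hat z)=2\int_\Omega f_s^{(1)}(Q,\hat z)$. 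Hence $F_\eps[Q_\eps]=F_0[Q]$ exactly, for every $\eps$, and in particular $\limsup F_\eps[Q_\eps]\le F_0[Q]$; also $Q_\eps\rightharpoonup Q$ trivially. The main obstacle throughout is the rigorous treatment of the boundary (face) traces --- reconciling the two-dimensional limiting trace with the two faces of the thin slab and extracting the constraint $Q\in\mathcal{D}$ --- since the surface energy is precisely what is singular as $\eps\to0$; the coercivity check on the anisotropic elastic form under the stated bounds on $M_2,M_3$ is the other technical, though routine, ingredient.
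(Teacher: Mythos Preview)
Your overall strategy coincides with the paper's: a constant-in-$z$ (trivial) recovery sequence; coercivity $f_e(\nabla Q)\ge C|\nabla Q|^2$ from the cited Gartland--Davis/Longa result under the hypotheses on $M_2,M_3$; and compactness via the resulting uniform $H^1$ bound together with $\int|\partial_z Q_\eps|^2\le C\eps^2$ (forcing $z$-independence of the limit) and $\int f_s^{(0)}(Q_\eps|_{z=0,1})\le C\eps$ plus strong $L^2$ convergence of the face traces (forcing the limit into $\mathcal D$). On all of these points your write-up is more explicit than the paper's but substantively identical; the paper simply says ``strong $L^2$-convergence of traces'' where you spell out the Newton--Leibniz estimate between $z$-slices.

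There is, however, a real gap in your liminf argument for the elastic term whenever the cross terms are present (i.e.\ $M_2\neq 0$ or $M_3\neq 0$). Writing $R:=\eps^{-1}\partial_z Q$ one has $f_e(\nabla Q)=f_e^0(\nabla'Q)+2B(\nabla'Q,R)+C(R)$ with $C$ positive definite; completing the square in $R$ yields only
\[
f_e(\nabla Q)\ \ge\ f_e^0(\nabla'Q)\ -\ \tilde B(\nabla'Q),
\]
where $\tilde B$ is a \emph{fixed} nonnegative quadratic form in $\nabla'Q$ (built from $M_2\,Q_{ij,j}$ and $M_3\,Q_{i3,j}$), not an $o(1)$ error. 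Concretely, for $Q_0\in H^1_g$ with $(Q_0)_{i3}$ constant and $M_2\ne0$, the sequence $Q_\eps=Q_0+\eps\,(z-\tfrac12)\,\chi\,P^\ast$, with $P^\ast$ the pointwise minimizer of $R\mapsto 2B(\nabla'Q_0,R)+C(R)$ and $\chi$ a cutoff vanishing near $\partial\Omega$, lies in $\mathcal C_1^g$, has $\eps^{-1}\!\int f_s^{(0)}=O(\eps)\to0$, and satisfies $\lim_\eps\int f_e(\nabla Q_\eps)=\int G(\nabla'Q_0,\chi P^\ast)<\int f_e^0(\nabla'Q_0)$. So the assertion ``$f_e(\nabla Q_\eps)\ge f_e^0(\nabla'Q_\eps)-(\text{error}\to0)$'' is not justified, and the liminf inequality with $f_e^0$ in the limit does not follow from this route. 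The paper's own proof is extremely terse at precisely this step --- it invokes weak lower semicontinuity of $\int f_e$ and Sobolev embedding without explaining how one passes from the $\eps$-dependent density $f_e$ to $f_e^0$ --- so the difficulty is not one you have introduced; your explicit completing-the-square claim merely makes it visible. For $M_2=M_3=0$ there are no cross terms, the pointwise bound $f_e\ge f_e^0$ is immediate, and both arguments are clean.
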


\begin{proof}First, we demonstrate that one can always choose a trivial recovery sequence. Indeed, if $Q_\eps\equiv Q\in\cg\backslash H^1_g$ then $\lim_{\eps\to0}F_\eps[Q_\eps]=+\infty=F_0[Q]$ and when $Q_\eps\equiv Q\in H^1_g$ then $F_\eps[Q_\eps]=F_0[Q_\eps]=F_0[Q]$ for all $\eps$.

For the lower semicontinuity part of the proof, consider an arbitrary sequence $\left\{Q_\eps\right\}_{\eps>0}\subset\cg$ converging weakly in $\cg$ to some $Q_0\in H^1_g.$ It has been established in \cite{Gartland_Davis} (Lemma 4.2) and \cite{Longa} that when the elastic constants satisfy the conditions $-1< M_3<2$, and $-\frac{3}{5}-\frac{1}{10}M_3< M_2$ the integral of $f_e$ is weakly lower semicontinuous in $H^1(\Omega\times(0,1))$ and, in fact,
\begin{equation}
\label{eq:coer}
f_e(\nabla Q)\geq C{|\nabla Q|}^2
\end{equation}
pointwise for all admissible $Q$, where $C>0$. Then using Sobolev embedding, one finds
\[\liminf_{\eps\to0}F_\eps[Q_\eps]\geq F_0[Q_0].\]
On the other hand, if $\left\{Q_\eps\right\}_{\eps>0}\subset\cg$ converges weakly in $\cg$ to some $Q_0\in \cg\backslash H^1_g,$ then $Q_0$ depends on $z$ and/or $Q_0$ is not $\mathcal{D}$-valued. In the first case, invoking \eqref{eq:coer}, we have
\[\liminf_{\eps\to0}F_\eps[Q_\eps]\geq C\liminf_{\eps\to0}\frac{1}{\epsilon^2}\int_{\Omega\times(0,1)}{|Q_{\eps,z}|}^2\,dV=+\infty.\]
In the second case,
\[\int_{\Omega\times\{0,1\}}f_s^{(0)}(Q_0,\hat z)\,dA\neq0,\]
(cf. \eqref{fs}), thus by strong $L^2$-convergence of traces on $\Omega\times\{0,1\}$,
\[\lim_{\eps\to0}{\frac{1}{\eps}\int_{\Omega\times\{0,1\}}f_s^{(0)}(Q_\eps,\hat z)\,dA=+\infty.}\]
Finally, since the uniform energy bound implies a uniform $H^1$-bound, there exists a subsequence weakly convergent in $H^1(\Omega\times(0,1);\mathcal{A})$ to a limit $Q_0$ that is independent of $z$. Further, strong convergence of traces in $L^2$ implies that $Q_0\in H^1_g$.
\end{proof}

\begin{remark}
  {\em When $M_2=M_3=0$, one can easily argue that the convergence of the subsequence is, in fact, strong. Indeed, $F_\eps[Q]=F_0[Q]$ for every $Q\in H^1_g$, hence $F_\eps[Q_\eps]\leq F_0[Q_0]$ for all $\eps>0$. Since
\begin{multline*}
\int_{\Omega\times(0,1)}f_{LdG}(Q_\eps)\,dV+\int_{\Omega\times\{0,1\}}f_s^{(1)}(Q_\eps,\hat z)\,dA \\ \to\int_{\Omega}\left(f_{LdG}(Q_0)+2f_s^{(1)}(Q_0,\hat z)\right)\,dA\mbox{ as }\eps\to0
\end{multline*}
we have
\[\limsup_{\eps\to0}\int_{\Omega\times(0,1)}\left({|Q_{\eps,x}|}^2+{|Q_{\eps,y}|}^2+\frac{1}{\epsilon^2}{|Q_{\eps,z}|}^2\right)\,dV\leq\int_{\Omega}\left({|Q_{0,x}|}^2+{|Q_{0,y}|}^2\right)\,dA.\]
Combining this with the lower semicontinuity of the $L^2$-norm of the derivative, strong convergence in $\cg$ follows.}
\end{remark}

\section{Minimizers of the $\Gamma$-limit in different regimes}
\label{s:reg}

In this section we explore minimization of the $\Gamma$-limit for different parameter regimes. The added penalty terms that originate from the three-dimensional surface term have a potential to disconnect uniaxial states whenever the Landau-de Gennes term becomes dominant. As we will show, this may result in formation of singular structures such as boundary layers of Allen-Cahn-type or vortices of Ginzburg-Landau-type with possible emergence of biaxiality (cf. \cite{biscari1997topological}).

In order to apply Theorem \ref{t1}, we note first that a minimizer $Q_\eps$ of $F_\eps$ exists for every $\eps>0$ by the direct method. Furthermore, selecting any $G\in H^1_g$ we observe that $F_\eps[Q_\eps]\leq F_\eps[G]=F_0[G]$ and so we can extract a weakly convergent subsequence $\{Q_\eps\}_{\eps>0}$ such that $Q_\eps\rightharpoonup Q_0$ for some $Q_0$ in $H^1_g$. Since by the basic properties of $\Gamma$-convergence any limit of minimizers is a minimizer of the limiting functional, we conclude that $Q_0$ minimizes $F_0$.

We will assume throughout this section that $\Omega$ is simply connected with a sufficiently smooth boundary. In what follows we will take $\gamma>0$ to be independent of $\eps$ (i.e., $\gamma_1=0$ in \eqref{eq:fsi}) and consider two distinct parameter regimes for $\alpha$ under several boundary conditions. 

\subsection{Regime $f_s^{(1)}\equiv0$}
\label{regime1}
First, let $\alpha_1=0$ so that $f_s^{(1)}\equiv0$ and $f_s^{(0)}$ given by \eqref{eq:fso}. Then $\mathcal{D}$-valued maps have $\hat z$ as one eigenvector with corresponding eigenvalue $\beta$. Note that these maps are not necessarily uniaxial, although they are required to be uniaxial on the boundary. 

There are two types of uniaxial $\mathcal D$-valued maps: those corresponding to $\mu=-\beta/2$ and $\mu=\beta$ in \eqref{eq:mu}, respectively. When $\mu=-\beta/2$ we have
\[Q=\frac{3\beta}{2}\left(\hat z\otimes\hat z-\frac{1}{3}\mathbf{I}\right)\]
and the uniaxial Dirichlet condition is completely rigid as $Q$ is equal to a constant. Alternatively, when $\mu=\beta$, one finds that \[Q=-3\beta\left(\mathbf{n}\otimes\mathbf{n}-\frac{1}{3}\mathbf{I}\right),\]
where $\mathbf{n}$ is an arbitrary unit vector field on the plane and one has the freedom to choose uniaxial Dirichlet boundary data of any degree. Here by the degree of $Q$ we understand the winding number of the planar $\mathbb S^1$-valued vector field
\(e^{2i\psi}\) on $\partial\Omega$ where $\mathbf n=e^{i\psi}$. Note that in this definition, it is assumed that $Q\in H^{1/2}(\partial\Omega)$ but this does not preclude the possibility of the phase $\psi$ of $\mathbf n$ jumping by an odd multiple of $\pi$ after one circulation around $\partial\Omega$. When this happens, the vector field $\mathbf n$ is discontinuous, but the field $e^{2i\psi}$ is smooth.

\medskip
\noindent{{\bf Case 1.} We begin by characterizing $Q_0$ in the first, topologically simpler case
\begin{equation}
\label{dobc}
Q|_{\partial\Omega\times(0,1)}=g:=\frac{3}{2}\beta\left(\hat z\otimes\hat z-\frac{1}{3}\mathbf{I}\right).
\end{equation}
Unless specified otherwise, we find it preferable from this point on to use the following representation of $Q\in H^1_g$ invoked, for example, in \cite{bauman_phillips_park} and motivated by simulations in \cite{PhysRevLett.59.2582}:
\begin{equation}
  \label{eq:pr}
  Q=\left(
    \begin{array}{ccc}
      p_1-\frac{\beta}{2} & p_2 & 0 \\
      p_2 & -p_1-\frac{\beta}{2} & 0 \\
      0 & 0 & \beta
    \end{array}
\right).
\end{equation}
It is a convenient change of variables in the setting when one eigenvector of the $Q$-tensor is parallel to the $z$-axis. Note that the boundary condition in the new coordinates is
\begin{equation}
  \label{eq:bcpq}
  \mathbf{p}|_{\partial\Omega}=(p_1,p_2)|_{\partial\Omega}=\mathbf{0}.
\end{equation}
Applying the identity
\begin{multline}
\label{eq:bpp}
f_e^0(Q)=\frac{1}{2}\left(2+M_2+M_3\right)|\nabla {\mathbf p}|^2+\frac{1}{8}\left(6+M_2+M_3\right)|\nabla\beta|^2 \\
+\frac{M_2+M_3}{2}\left(p_{1x}\beta_x-p_{1y}\beta_y+p_{2y}\beta_x+p_{2x}\beta_y\right)+\left|M_2+M_3\right|\left(p_{1x}p_{2y}-p_{1y}p_{2x}\right)
\end{multline}
from \cite{bauman_phillips_park}, the expression for $F_0[Q]$ takes the form
\begin{equation}
  \label{eq:f0pq}
  \frac{1}{M}F_0[Q]=\tilde F_0[\mathbf{p}]:=\int_{\Omega}\left\{\frac{1}{2}{\left|\nabla\mathbf{p}\right|}^2+\frac{1}{\delta^2}W(|\mathbf{p}|)\right\}\,dV,
\end{equation}
where we have used that $\beta$ is constant and that the integral of the Jacobian of $\mathbf p$ vanishes due to \eqref{eq:bcpq}. Here $\mathbf{p}=\left(p_1,p_2\right),$ the parameters $M=2+M_2+M_3$, $\delta=\sqrt{M/w_l}$, and
\begin{equation}
  \label{eq:tc}
  W(t)=4t^4+\tilde{C}t^2+\tilde{D}
\end{equation}
with $\tilde{C}=6\beta^2-4B\beta+4A$ and $\tilde{D}\in\mathbb{R}$. Note that $\tilde C$, in particular, varies with temperature through its dependence on the coefficient $A$. It is plausible to assume that $\tilde C$ may change its sign in appropriate circumstances.

One easily observes that the minimizer of \eqref{eq:f0pq}, subject to the boundary condition \eqref{eq:bcpq} has a constant phase and so \eqref{eq:f0pq} reduces to a scalar minimization problem for the modulus $p:=|\mathbf{p}|$. The minimizers of  \eqref{eq:f0pq} satisfy the Allen-Cahn type equation
\begin{equation}
  \label{eq:whatever}
  -\Delta p+\frac{1}{\delta^2}W^\prime(p)=0\mbox{ in }\Omega,\quad p=0\mbox{ on }\partial\Omega.
\end{equation}
The function $p\equiv0$ always solves this problem and, in fact, is the unique critical point and thus the minimizer if $\tilde{C}\geq0$. Therefore the minimizing $Q$-tensor in this regime corresponds to a constant uniaxial state
\begin{equation}
  \label{eq:qbeta}
  Q_\beta=\left(
    \begin{array}{ccc}
      -\frac{\beta}{2} & 0 & 0 \\
      0 & -\frac{\beta}{2} & 0 \\
      0 & 0 & \beta
    \end{array}
\right).
\end{equation}
(cf.  \eqref{eq:pr}) matching the boundary condition \eqref{dobc}. When $\tilde{C}<0$, however, the constant state $Q\equiv Q_\beta$ looses stability once $\lambda_1(\Omega)$ exceeds $-2\tilde{C}/\delta^2$, where $\lambda_1(\Omega)$ is the first eigenvalue of the Laplacian. A minimizing nonconstant solution emerges in this parameter regime when the value of $p$ enforced on $\partial\Omega$ by the surface energy does not minimize the bulk Landau-de Gennes energy. Indeed we expect a boundary layer to form in the vicinity of $\partial\Omega$, bridging $p=0$ to the minimum value of $W$ in the bulk when $\delta\ll1$ see e.g. \cite{Berger_70}.

Further, the nonconstant minimizing configuration cannot be uniaxial everywhere in $\Omega$ as can be seen from the measure of biaxiality introduced in \cite{kaiser1992stability}:
\begin{equation}
  \label{eq:bia}
  \xi(Q)^2:=1-6\frac{{\left(\tr{Q}^3\right)}^2}{{\left(\tr{Q}^2\right)}^3},
\end{equation}
where $\xi(Q)=0$ implies that $Q$ is uniaxial. If we express $\xi$ in terms of $p$ and $\beta$, we have
\begin{equation}
  \label{eq:biap}
  \xi(p,\beta)=1-27\,\frac{\beta^2\left(4p^2-\beta^2\right)^2}{(4p^2+3\beta^2)^3}.
\end{equation}
Since $\beta$ is fixed, there are finitely many values of $p$ where $\xi(p,\beta)$ vanishes. We conclude that if $p$ is a nontrivial solution of the elliptic boundary value problem \eqref{eq:whatever} then the minimizer is necessarily biaxial almost everywhere.

\medskip
\noindent{{\bf Case 2.} Now we turn to the case of the boundary condition
\begin{equation}
\label{nbc}
Q|_{\partial\Omega\times(0,1)}=g:=-3\beta\left(\bn\otimes\bn -\frac{1}{3}\mathbf{I}\right),
\end{equation}
where $\bn:\partial\Omega\to\mathbb{S}^1$ is such that $\bn\otimes\bn$ is smooth and may have a nonzero degree in the sense described above. Then the tensor $Q_0$ minimizes the energy $F_0$ given by \eqref{eq:f0pq}, subject to the boundary condition
\begin{equation}
\label{nbc_0}
Q|_{\partial\Omega}=-3\beta\left(\bn\otimes\bn -\frac{1}{3}\mathbf{I}\right).
\end{equation}
Using the representation \eqref{eq:pr}, we have that $Q_0$ can be represented by the vector $\mathbf{p}_0$ that minimizes $\tilde{F}_0$ and satisfies
\begin{equation}
  \label{eq:pbc}
  \mathbf{p}=-3\beta\left(n_1^2-\frac{1}{2},n_1n_2\right)
\end{equation}
on $\partial\Omega$ where $|\mathbf{p}|=\frac{3\beta}{2}$. In fact, $\tilde{F}_0$ is altered by an additive constant depending on $g$ due to the presence of the null Lagrangian $p_{1x}p_{2y}-p_{1y}p_{2x}$ in \eqref{eq:bpp} that will not affect minimization.

In order to better understand the behavior of $Q_0$ let $\delta$ be small. Then, in general, when $\tilde{C}$ in \eqref{eq:tc} is negative, the corresponding variational problem is of Ginzburg-Landau-type, but with a boundary layer bridging the equilibrium value $\sqrt{-\tilde C/8}$ of $p$ in the bulk to that of $3\beta/2$ enforced by the surface energy on $\partial\Omega$. Furthermore, for topologically nontrivial boundary data for $\mathbf{p}$, the minimizing vector field $\mathbf{p}_0$ has to vanish somewhere within a vortex core structure of a characteristic size $\delta$ in $\Omega$. Recalling \eqref{eq:pr}, we have that $\mathbf{p}_0(x)=\mathbf{0}$ corresponds to a uniaxial state at $x$ with the director pointing along $\hat z$, namely
\[Q_0(x)=\frac{3}{2}\beta\left(\hat z\otimes\hat z-\frac{1}{3}\mathbf{I}\right).\]

In the case of $\tilde{C}\geq0$---making $W$ convex with a unique minimum at $p=0$---we expect again that a boundary layer would form along $\partial\Omega$ bridging the boundary value of $p$, namely $3\beta/2$, to zero in the interior of $\Omega$. In all cases discussed in this section, we expect that symmetry breaking caused by surface energy will induce biaxiality within small sets where $Q_0$ experiences large variations.

\begin{remark}
It is also possible to consider an intermediate asymptotic regime with non-zero degree Dirichlet data where minimality does not force biaxiality. Suppose the surface energy is taken to be substantially smaller than in those cases considered so far but where the relative strength of the Landau-de Gennes contribution is stronger. Suppose, for example,
that $\gamma=\eps\gamma_1$ and $\alpha_0=\alpha_1=0$ for some $\gamma_1>0$ in expressions \eqref{eq:fso}, \eqref{eq:fsi} while say $w_l=\frac{w_0}{\eps}$ for some $w_0>0.$ In view of \eqref{nden} one
sees that the new $\Gamma$-limit would now take the form
 \begin{equation}
   \label{eq:f0r}
   F_0[Q]:=\left\{
     \begin{array}{ll}
       \int_{\Omega}\left\{f_{e}^0(\nabla Q)+2\gamma_0{\left|\left(\mathbf{I}-\hat z\otimes\hat z\right)Q\hat z\right|}^2\right\}\,dA & \mbox{ if }Q\in \tilde{H}^1_g, \\
       +\infty & \mbox{ otherwise. }
     \end{array}
 \right.
 \end{equation}
where $\tilde{H}^1_g:=\{Q\in H^1(\Omega;\mathcal{A}):\;Q|_{\partial\Omega}=g,\;f_{LdG}(Q)=0\}$.
Here we assume the uniaxial data $g$ takes the form
\[
g=S^*\left(\bn\otimes\bn -\frac{1}{3}\mathbf{I}\right)
\] for some planar vector field $\bn$ where $S^*$
corresponds to the preferred value dictated by the requirement $f_{LdG}(g)=0$ and suppose $g$ has even degree so as to ensure that $\tilde{H}^1_g$ is nonempty. In this case,
say for $\gamma_0$ large, one expects the uniaxial minimizer to simply undergo an out of plane rotation of its director in a neighborhood of the boundary, keeping the degree of orientation fixed, thereby
smoothly bridging the boundary value ${\bf n}$ to $\hat{z}$ inside the domain so as to accommodate the cost of the remnant of the surface energy term residing in the $\Gamma$-limit.
\end{remark}

\subsection{Regime $f_s^{(1)}(Q,\hat z)=\alpha_1\left[(Q\hat z\cdot\hat z)-\beta\right]^2$}
Now consider the case when $\alpha=\eps \alpha_1$ and $\gamma=\gamma_0$ for some $\gamma_0,\alpha_1>0$. Then $f_s^{(0)}(Q,\hat z)=\gamma_0{\left|\left(\mathbf{I}-\hat z\otimes\hat z\right)Q\hat z\right|}^2$ and $f_s^{(1)}(Q,\hat z)=\alpha_1\left[(Q\hat z\cdot\hat z)-\beta\right]^2$ and the set $\mathcal{D}$ consists of traceless symmetric tensors having $\hat z$ as one of its eigenvectors.  When $\alpha_1=0$ the asymptotic behavior of the limiting functional $F_0$ was considered in \cite{bauman_phillips_park} as $\delta\to0$. They characterize the minimizer by identifying finitely many interacting vortices whose positions are determined via minimization of a renormalized energy of the type introduced in \cite{BBH} for the Ginzburg-Landau functional. This description is very similar to the already discussed regime above which formally corresponds to $\alpha_1=\infty$, i.e., an eigenvalue corresponding to $\hat z$ is identically equal to $\beta$ in $\Omega$. Hence we expect similar behavior for any finite value of $\alpha_1$.

\section{Structure of the singular region for topologically nontrivial boundary data}
In this section we provide further insight into the structure of the singular region that develops for topologically nontrivial boundary data in Case 2 of Section \ref{regime1} when $\tilde C<0$. 

Suppose for the moment that the dimensional reduction was carried out as in Theorem \ref{t1} for a sequence of functionals without the surface energy term. In this case, the conclusion of the theorem would remain the same, with the exception that no additional constraints would have to be imposed on the target space of admissible tensors for the limiting problem. In particular, any tensor field described by a field of symmetric traceless $3\times3$ matrices in $H^1(\Omega)$ satisfying the appropriate boundary conditions on $\partial\Omega$ would be admissible. 

Now recall that any tensor representing a uniaxial nematic as in \eqref{uniaxial} can be associated with a point on $\mathbb{RP}^2$, i.e., on $\mathbb{S}^2$ with antipodal points identified (since ${\bf n}$ and $-{\bf n}$ lead to the same $Q$-tensor). This means that the tensor field
 $Q|_{\partial\Omega}$ corresponding to the $\mathbb{S}^1$-valued boundary data \eqref{nbc_0} can be associated with one or more semi-circular arcs along the equator of $\mathbb{S}^2$. If the degree of ${\bf p}$ in \eqref{eq:pbc} is an even integer, then in terms of the image in $\mathbb{RP}^2$ this corresponds to an even number of such half-equators. One can then smoothly deform the $Q$-tensor within $\Omega$ through a field of uniaxial nematics as indicated in Figure \ref{fig:even} to, say, a point in the interior of $\Omega$ where the director points either north or south. That is, the even number of semi-circles originally overlapping on the equator can gradually migrate towards the poles as a family of closed curves on $\mathbb{RP}^2$. 
 
Note that a sufficiently smooth uniaxial nematic tensor field can be described by a smooth director field \cite{Ball_Zarn} with the boundary data of degree equal to the half of that for $Q|_{\partial\Omega}$. Hence the uniaxial nematic tensor field can be constructed even if the degree of the corresponding director field on the boundary is non-zero. Putting this another way, the director is $\mathbb{S}^2$-valued in $\Omega$ and its $\mathbb{S}^1$-valued topologically nontrivial restriction to $\partial\Omega$ can be bridged without forming a singularity in the interior of $\Omega$ by the director ``escaping into the third dimension".
\begin{figure}[htb]
\centering
\includegraphics[height=2.5in]{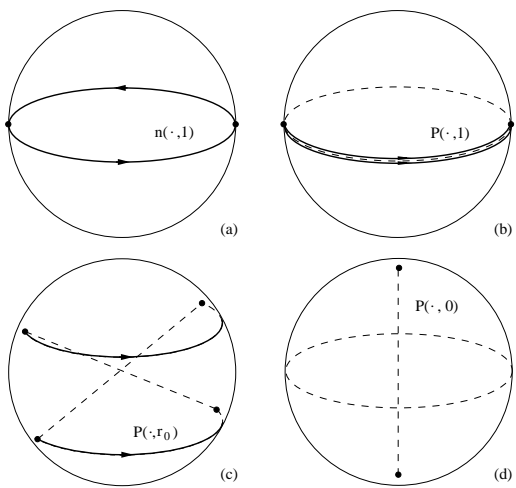}
\caption{A uniaxial nematic tensor $Q$ with the director ${\bf n}$ can be identified with a projection matrix $\mathrm{P}={\bf n}\otimes{\bf n}$, up to a translation and dilation. Let $\Omega=B_1(0)\subset\mathbb{R}^2$ and let $s$ be some parametrization of a circle of radius $r\in(0,1]$. (a) The boundary data ${\bf n}|_{\partial\Omega}=e^{is}$ corresponds to an equator in $\mathbb{S}^2$; (b) The boundary data for $\mathrm{P}$ corresponds to two half-equators in $\mathbb{S}^2$; (c) For $0<r_0<1$, the half-circles migrate toward the north and south poles of $\mathbb{S}^2$, respectively; (d) At $r=0$ half-circles contract to the poles. This corresponds to the director pointing up or down at $r=0$.}
  \label{fig:even}
\end{figure}

If the degree of ${\bf p}$ in \eqref{eq:pbc} is an odd integer, then the image of $\partial\Omega$ in $\mathbb{RP}^2$ corresponds to an odd number of half-equators of $\mathbb{S}^2$. This curve is not contractible in $\mathbb{RP}^2$, i.e., it cannot be smoothly deformed into a pair of points in $\mathbb{S}^2$. As Figure \ref{fig:odd} illustrates, the shortest smooth deformation of an odd number of half-equators of $\mathbb{S}^2$ is a single half-equator in $\mathbb{S}^2$, that is a closed geodesic in $\mathbb{RP}^2$. Then a uniaxial nematic $Q$-tensor field with non-contractible boundary data on $\partial\Omega$ cannot be smooth everywhere in $\Omega$ and must have a singularity at some $x\in\Omega$ as $Q$ will have to assume all values on a single half-equator in $\mathbb{S}^2$ in an arbitrarily small neighborhood of $x$.
\begin{figure}[htb]
\centering
\includegraphics[height=2.5in]{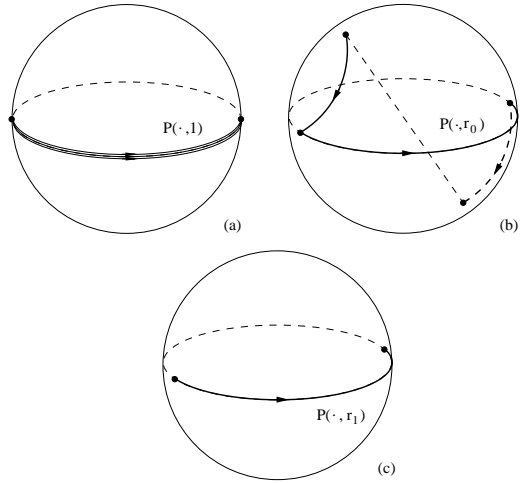}
\caption{Behavior of $Q$ for the boundary data of degree $3$. The setup is the same as in the caption to Figure \ref{fig:even} and $0<r_1\ll1$.}
  \label{fig:odd}
\end{figure}

Staying with this broader class of admissible tensors for the moment, if we now let $w_l$ in \eqref{eq:f0} be large but finite (this is equivalent to setting $\delta$ in \eqref{eq:f0pq} small), any deviation of $Q$ from a uniaxial nematic state--a minimum of $f_{LdG}$-- will result in large energy penalty. The smooth energy-minimizing tensor field will then be approximately uniaxial nematic everywhere. That is, as described above, if the degree of $Q|_{\partial\Omega}$ is even then there is no topological obstruction preventing $Q_0$ from being almost uniaxial nematic everywhere in $\Omega$.  Since $Q_0$ has a fixed set of eigenvalues that minimize $f_{LdG}$ and the director vector is an eigenvector of $Q$ corresponding to the eigenvalue with the largest magnitude, the variations of $Q$ throughout the domain $\Omega$ can be interpreted as rotations of the eigenframe of $Q$. On the other hand, if the degree of $Q|_{\partial\Omega}$, i.e. the degree of ${\bf p}$ in \eqref{eq:pbc}, is an odd integer, then no such smooth uniaxial deformation exists going into $\Omega$ and there will be a small region of the size $1/\sqrt{w_l}$ where the topological constraint will force $Q_0$ to be isotropic and/or biaxial (cf. \cite{GM,canevari13}).

Let us now contrast this more relaxed target space with what occurs in our investigation where the full energy functional contains the surface energy term. The effect of this is that the admissible tensor fields $Q$ for the limiting problem are now $\mathcal D$-valued.
\begin{figure}[htb]
\centering
\includegraphics[height=1.5in]{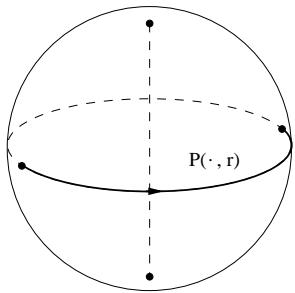}
\caption{Geometry of the target space of uniaxial nematic $\mathcal D$-valued maps. Only the values along the equator and at the poles of $\mathbb{S}^2$ are permitted.}
  \label{fig:dclass}
\end{figure}
Since the normal to the surface of the film is then one of the eigenvectors of $Q$, there are only two types of possible uniaxial nematic states for admissible tensors---those with the director either perpendicular or parallel to the surface of the film (Figure \ref{fig:dclass}). If $\delta$ in \eqref{eq:f0pq} is small, the potential term $W$ would force the liquid crystal to be uniaxial nematic throughout most of $\Omega$, thus making the director either perpendicular or parallel to the surface normal $\hat z$. This orientational constraint is due to the strong influence of the surface energy and it makes director escape through the sequence of nematic states extremely costly. Given any topologically nontrivial planar boundary data for the director on $\partial\Omega$, we expect the director to remain planar as much as possible thus leading to formation of a localized region or regions with large gradients inside $\Omega$ for {\it any} choice of topologically nontrivial ${\bf p}_{\partial\Omega}$.

At the core of any singular region that may develop, one would expect that the orientations of the molecules would be parallel to the surface of the film, while remaining random within that surface. The orientational distribution function then is axially symmetric with respect to the normal to the film and the corresponding $Q$-tensor is uniaxial with the director being parallel to the normal (cf. \cite{virga_talk}). Since only two types of uniaxial states are permitted for admissible tensors, the connection between the uniaxial states in the core and away from the core has to occur through a sequence of biaxial states via the mechanism of so-called eigenvalue exchange \cite{palffy1994new}.

\section{Conclusions}
In this paper we have considered a rigorous dimension reduction procedure for nematic films described within the Landau-de Gennes model. We have established a general $\Gamma$-convergence result for the sequence of non-dimensional Landau-de Gennes energies as a small parameter $\eps$ related to the film thickness tends to zero. Although this result is applicable to any combination of coefficients in the expression \eqref{bare} for the surface energy, we only consider cases when a $Q$-tensor minimizing $f_s(Q,\nu)$ in \eqref{bare} has the normal to the surface of the film as one of its eigenvectors.

In the thin-film limit, the dominant contributions to the energy come from the bulk elastic energy terms containing the derivatives in the direction normal to the surface of the film as well as from the surface energy terms that are independent of $\eps$. The energy penalty due to normal derivative terms forces the limiting energy-minimizing $Q$-tensor field to be independent of the spatial variable perpendicular to the surface of the film and reduces the domain of the problem from three to two dimensions. The presence of the surface energy terms that are independent of $\eps$ imposes constraints on the target space of admissible three-by-three $Q$-tensors. The resulting limiting bulk energy defined over the constrained set of $Q$-tensors consists of lower order bulk and surface energy contributions.

Our results, along with those in \cite{bauman_phillips_park}, demonstrate that the limiting problems for various parameter regimes can be studied by using the techniques developed for scalar Allen-Cahn type problems or Ginzburg-Landau-type vector-valued problems. Depending on the relationship between the coefficients of the surface and bulk energy terms, minimizing $Q$-tensor fields can develop boundary layers. For topologically nontrivial boundary data and large $w_l$ in \eqref{eq:f0}, the $Q$-tensor fields are characterized by vortices with characteristic core size $\sim1/\sqrt{w_l}.$ Note that even though $1/\sqrt{w_l}$ in this case is small, it is still much larger than $\eps$ and two-dimensional vortices correspond to disclination lines perpendicular to the surface of the film.

\section{Acknowledgements}
D.G. acknowledges support from NSF DMS-1434969. P.S. acknowledges support from NSF DMS-1101290 and NSF DMS-1362879.

\bibliographystyle{ieeetr}
\bibliography{tensor_nematic}

\end{document}